\newtheorem{theorem}{Theorem}[section]
\theoremstyle{definition}
\newtheorem{definition}[theorem]{Definition}
\newcommand{\real}{{\ensuremath{\mathbb{R}}}}
\newcommand{\sphere}{{\ensuremath{\mathbb{S}}}}
\newcommand{\zz}{{\ensuremath{\mathbb{Z}}}}
\newcommand{\one}{{\ensuremath{\mathbbm{1}}}}
\newcommand{\cB}{{\ensuremath{\cal B}}}
\newcommand{\cP}{{\ensuremath{\cal P}}}
\newcommand{\myE}{{\ensuremath{\mathbb{E}}}}
\newcommand{\myL}{{\ensuremath{\rm L}}}
\newcommand{\myS}{{\ensuremath{\rm S}}}
\newcommand{\hsp}{\hspace{0.2mm}}
\begin{document}

\begin{center}
{\LARGE \bf

On the Cover-Hart Inequality: \\
What's a Sample of Size One Worth? 

}

\bigskip
{\bf Tilmann Gneiting, Universit\"at Heidelberg}

\bigskip
{\bf \today}
\end{center}

\medskip
\begin{abstract}

Bob predicts a future observation based on a sample of size one.
Alice can draw a sample of any size before issuing her prediction.
How much better can she do than Bob?  Perhaps surprisingly, under a
large class of loss functions, which we refer to as the Cover-Hart
family, the best Alice can do is to halve Bob's risk.  In this sense,
half the information in an infinite sample is contained in a sample of
size one.  The Cover-Hart family is a convex cone that includes metrics
and negative definite functions, subject to slight regularity
conditions.  These results may help explain the small relative
differences in empirical performance measures in applied
classification and forecasting problems, as well as the success of
reasoning and learning by analogy in general, and nearest neighbor
techniques in particular.

\medskip
\noindent
{\em Key words:} Bayes risk; decision theory; kernel score; loss
function; metric; nearest neighbor; negative definite; proper scoring
rule.
\end{abstract}

\smallskip
\section{Introduction} \label{sec:intro}

Alice and Bob compete in a game of prediction.  The task is to predict
a future observation, such as a class label, or a real-valued,
vector-valued or highly structured outcome.  Before issuing a point
forecast, Alice and Bob may sample from the underlying population. Bob
has access to a sample of size one only, whereas Alice can draw a
sample of any desired size.  The predictive performance is evaluated
by means of a loss function, $\myL(y,y') \geq 0$, where $y$ is the
point forecast, and $y'$ is the realizing value of the future
observation, $Y'$.  Intuitively, we expect Alice to do much better
than Bob, as she can gather essentially all information available,
thereby attaining or approximating the Bayes risk, namely
\[
\alpha \equiv {\textstyle \inf_y} \, \myE_P \hsp \myL(y,Y'),
\]
where $Y'$ has distribution $P$.  However, Cover and Hart (1967) and
Cover (1968) showed that under misclassification loss and squared
error, Bob's risk, $\beta$, is at most twice Alice's risk, $\alpha$,
that is,
\begin{equation}  \label{eq:CH1}  
\alpha \leq \beta \equiv \myE_P \hsp \myL(Y,Y') \leq 2 \hsp \alpha,
\end{equation} 
where $Y$ and $Y'$ are independent with distribution $P$.

In an elegant and thought-provoking discussion, Cover (1977) noted
that the inequality continues to hold if the loss function is a
metric.  In this paper, we seek a unifying treatment of these
remarkable facts.  Section \ref{sec:point} identifies large classes of
loss functions that satisfy the Cover-Hart inequality (\ref{eq:CH1}),
including both metrics and negative definite functions.  Section
\ref{sec:probabilistic} considers probabilistic predictions, where the
forecasts take the form of predictive probability distributions, and
the predictive performance is evaluated by means of a proper scoring
rule, $\myS(Q,y')$, where $Q$ is the predictive probability
distribution and $y'$ is the realizing observation (Gneiting and
Raftery 2007).  Under the class of kernel scores, which includes the
Brier score and the continuous ranked probability score, an analogue
of the Cover-Hart inequality applies, in that
\begin{equation}  \label{eq:CHP1}  
\alpha \equiv \myE_P \hsp \myS(P,Y') 
\leq \beta \equiv \myE_P \hsp \myS(\delta_Y,Y') \leq 2 \hsp \alpha,
\end{equation} 
where, again, $Y$ and $Y'$ are independent with distribution $P$, and
$\delta_Y$ is the point or Dirac random probability measure in $Y$.
The paper closes with a discussion in Section \ref{sec:discussion},
where we relate to the empirical success of reasoning and learning by
analogy in general, and of nearest neighbor techniques in particular.

\section{Point predictions based on a single observation}  \label{sec:point}

We now discuss the generality of the Cover-Hart inequality.  Toward
this end, we let $\cP$ be the family of the Radon probability measures
on a Hausdorff space $(\Omega, \cB)$, where $\cB$ is the
Borel-$\sigma$-algebra.  We say that a function $\myL: \Omega \times
\Omega \to \real$ is measurable if it is measurable with respect to
either argument when the other argument is fixed.

\begin{definition}  \label{def:CH}
The {\em Cover-Hart class}\/ consists of the measurable functions
$\myL: \Omega \times \Omega \to [0,\infty)$ which are such that
  $\myL(y,y) = 0$ for all $y \in \Omega$, and
\begin{equation}  \label{eq:CH2}   
\alpha \equiv {\textstyle \inf_y} \, \myE_P \hsp \myL(y,Y')  
\leq \beta \equiv \myE_P \hsp \myL(Y,Y') \leq 2 \hsp \alpha
\end{equation} 
for all probability measures $P \in \cP$, where $Y$ and $Y'$ are
independent with distribution $P$.
\end{definition} 

Under a loss function in the Cover-Hart class, half the information in
an infinite sample is contained in a sample of size one, in the sense
that predicting a future observation from a single past observation
incurs at most twice the Bayes risk. 

\begin{theorem}  \label{th:convex.cone}
The Cover-Hart class is a convex cone. 
\end{theorem}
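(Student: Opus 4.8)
The Cover-Hart class consists of measurable functions $\myL: \Omega \times \Omega \to [0,\infty)$ with $\myL(y,y) = 0$ and satisfying inequality (\ref{eq:CH2}). I need to show this is a convex cone: closed under nonnegative scalar multiplication (easy), and closed under addition (the substantive part).

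**Nonnegative scalar multiplication.** If $\myL$ is in the class and $c \geq 0$, then $c\myL$ is measurable, nonnegative, vanishes on the diagonal, and since $\alpha$ and $\beta$ both scale linearly by $c$, inequality (\ref{eq:CH2}) is preserved. So the class is a cone.

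**The hard part: additivity.** The obstacle is that the functionals $\alpha$ and $\beta$ do NOT behave additively. For $\beta$ it's fine: $\myE_P[(\myL_1 + \myL_2)(Y,Y')] = \myE_P[\myL_1(Y,Y')] + \myE_P[\myL_2(Y,Y')] = \beta_1 + \beta_2$, since the same independent pair $(Y,Y')$ is used. The trouble is $\alpha$: the infimum of a sum is at least... wait, no — $\inf_y (\myE_P \myL_1(y,Y') + \myE_P \myL_2(y,Y'))$ is at LEAST... no, it's at MOST $\alpha_1 + \alpha_2$? Let me think. Actually $\inf_y (f(y) + g(y)) \geq \inf_y f(y) + \inf_y g(y)$ is FALSE in general; the correct direction is $\inf_y(f+g) \geq \inf f + \inf g$. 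Hmm, actually $\inf_y(f(y)+g(y)) \geq \inf_y f(y) + \inf_y g(y)$ IS true. But for the Cover-Hart upper bound $\beta \leq 2\alpha$, I'd want $\alpha_{1+2} \geq$ something, i.e., I need a LOWER bound on $\alpha_{1+2}$, and $\alpha_{1+2} \geq \alpha_1 + \alpha_2$ goes the wrong way only if... let me recompute. We have $\beta_{1+2} = \beta_1 + \beta_2 \leq 2\alpha_1 + 2\alpha_2$. To conclude $\beta_{1+2} \leq 2\alpha_{1+2}$, I need $\alpha_1 + \alpha_2 \leq \alpha_{1+2}$. But $\alpha_{1+2} = \inf_y \myE_P[\myL_1(y,Y') + \myL_2(y,Y')] \geq \inf_y \myE_P \myL_1(y,Y') + \inf_y \myE_P \myL_2(y,Y') = \alpha_1 + \alpha_2$. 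Good — that inequality holds and goes the right way! Then $\beta_{1+2} = \beta_1 + \beta_2 \leq 2\alpha_1 + 2\alpha_2 \leq 2\alpha_{1+2}$. For the left inequality $\alpha_{1+2} \leq \beta_{1+2}$: this is also immediate since $\alpha_j \leq \beta_j$ and $\alpha_{1+2} \leq$... hmm, actually I need $\alpha_{1+2} \leq \beta_{1+2}$. We have $\alpha_{1+2} = \inf_y \myE_P[\myL_1(y,Y') + \myL_2(y,Y')] \leq \myE_P[\myL_1(Y',Y') + \myL_2(Y',Y')]$... no wait, that plugs in $Y'$ which is random. Better: $\alpha_{1+2} \leq \myE_{Y\sim P}\myE_{Y'\sim P}[\myL_1(Y,Y')+\myL_2(Y,Y')]$ by taking $y$ to be any fixed value and averaging — actually the infimum over $y$ is at most the average over $y \sim P$, giving $\alpha_{1+2} \leq \beta_{1+2}$.

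So the plan is: (1) verify the cone property by linearity of $\alpha$ and $\beta$ under scaling; (2) for $\myL = \myL_1 + \myL_2$, check measurability, nonnegativity, and the diagonal condition are inherited; (3) observe $\beta_{1+2} = \beta_1 + \beta_2$ exactly, by linearity of expectation over the common pair $(Y,Y')$; (4) establish $\alpha_{1+2} \geq \alpha_1 + \alpha_2$ via superadditivity of the infimum of a sum, and $\alpha_{1+2} \leq \beta_{1+2}$ via "infimum $\leq$ average"; (5) chain these: $\alpha_{1+2} \leq \beta_{1+2} = \beta_1 + \beta_2 \leq 2\alpha_1 + 2\alpha_2 \leq 2\alpha_{1+2}$. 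I don't anticipate a serious obstacle — the only subtlety is keeping track of which direction the infimum-of-sum inequality runs, and confirming all the measurability/integrability bookkeeping (finiteness of $\alpha_j$, which is implicit in (\ref{eq:CH2}) holding with real-valued bounds) goes through for the sum; in particular if some $\alpha_j = \infty$ the inequality is vacuous, and if both are finite then $\alpha_{1+2} \leq \beta_1 + \beta_2 < \infty$ as well.
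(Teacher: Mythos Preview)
Your proposal is correct and follows essentially the same approach as the paper: the key step in both is the superadditivity of the infimum, $\inf_y(f(y)+g(y)) \geq \inf_y f(y) + \inf_y g(y)$, which yields $\alpha_{1+2} \geq \alpha_1 + \alpha_2$ and hence $\beta_{1+2} = \beta_1 + \beta_2 \leq 2\alpha_1 + 2\alpha_2 \leq 2\alpha_{1+2}$. The only cosmetic difference is that the paper treats nonnegative linear combinations $c_1\myL_1 + c_2\myL_2$ in a single pass rather than separating scalar multiplication from addition, and it omits the verification of $\alpha \leq \beta$ since that inequality holds automatically for any nonnegative measurable $\myL$ by your ``infimum $\leq$ average'' observation.
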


\begin{proof} 
Suppose that $\myL_1$ and $\myL_2$ belong to the Cover-Hart class and
let $c_1, c_2 \geq 0$.  Then the convex combination $\myL = c_1 \myL_1
+ c_2 \myL_2$ is measurable, $\myL(y,y) = 0$ for all $y \in \Omega$,
and
\begin{eqnarray*} 
\myE_P \hsp \myL(Y,Y') & = & c_1 \hsp \myE_P \hsp \myL_1(Y,Y') + c_2 \hsp \myE_P \hsp \myL_2(Y,Y') \\
& \leq & \textstyle 2 \hsp c_1 \inf_{y_1} \myE_P \hsp \myL_1(y_1,Y') 
                  + 2 \hsp c_2 \inf_{y_2} \myE_P \hsp \myL_2(y_2,Y') \\
& \leq & \textstyle 2 \inf_y \left( c_1 \hsp \myE_P \hsp \myL_1(y,Y') 
                                  + c_2 \hsp \myE_P \hsp \myL_2(y,Y') \right) \\
& \leq & \textstyle 2 \inf_y \myE_P \hsp \myL(y,Y')
\end{eqnarray*}
for every $P \in \cP$, whence $\myL$ belongs to the Cover-Hart class.
\end{proof} 

The following result is based on a slight extension of an argument of Cover
(1977), who implicitly assumed the existence of a Bayes rule.

\begin{theorem}  \label{th:metric}
Any measurable metric belongs to the Cover-Hart class.
\end{theorem}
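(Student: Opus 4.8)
The plan is to verify the three conditions defining the Cover-Hart class for $\myL = d$. Measurability holds by hypothesis (``measurable metric'') and $d(y,y)=0$ is automatic, so the whole task reduces to establishing the chain $\alpha \le \beta \le 2\alpha$, which I would split into its two halves: the left inequality holds for \emph{any} nonnegative loss and is essentially free, while the right inequality is where the triangle inequality does all the work.

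\emph{Left inequality.} Write $\beta = \myE_P\, d(Y,Y') = \myE_P[\,g(Y)\,]$ with $g(y) := \myE_P\, d(y,Y')$ (by Tonelli, legitimate once $d$ is jointly measurable --- see the last paragraph). Since $g(y) \ge \alpha$ for every $y$ --- this being precisely the definition $\alpha = \inf_y \myE_P\, d(y,Y')$ --- integrating over $Y \sim P$ gives $\beta \ge \alpha$. If $\alpha = \infty$ this already forces $\beta = \infty$ and the statement is trivial, so from here on assume $\alpha < \infty$.

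\emph{Right inequality: the extension of Cover (1977).} Fix an arbitrary $y_0 \in \Omega$. The triangle inequality gives, pointwise in $y,y'$,
\[
d(y,y') \;\le\; d(y,y_0) + d(y_0,y').
\]
Taking expectations over the independent pair $(Y,Y') \sim P \otimes P$ and using that $Y$ and $Y'$ share the law $P$,
\[
\beta \;=\; \myE_P\, d(Y,Y') \;\le\; \myE_P\, d(Y,y_0) + \myE_P\, d(y_0,Y') \;=\; 2\,\myE_P\, d(y_0,Y').
\]
Since $y_0 \in \Omega$ was arbitrary, passing to the infimum over $y_0$ on the right yields $\beta \le 2\inf_{y_0}\myE_P\, d(y_0,Y') = 2\alpha$. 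The sole difference from Cover's argument is that he implicitly assumed the existence of a Bayes point $y^\star$ attaining $\alpha$ and applied the triangle inequality directly with $y_0 = y^\star$; here one avoids that assumption by running the estimate for every $y_0$ and taking the infimum only at the end.

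\emph{The one delicate point.} Because the Cover-Hart framework presupposes only separate measurability of $\myL$, a word is needed on why $\beta = \myE_P\, d(Y,Y')$ is a bona fide expectation. This is exactly where the advertised ``slight regularity conditions'' enter: $d(Y,Y')$ is sandwiched between $0$ and the jointly measurable function $d(Y,y_0)+d(y_0,Y')$, which is integrable once $y_0$ is chosen with $\myE_P\, d(y_0,Y') < \infty$ (possible since $\alpha < \infty$); hence under any hypothesis forcing the metric to be jointly measurable --- for instance separability of $(\Omega,d)$, under which $d$ is continuous and therefore Borel on the product --- $\beta$ is finite and well-defined, Tonelli and Fubini apply to the nonnegative and dominated integrands above, and every displayed step is justified. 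I expect this measurability bookkeeping, not the inequality, to be the only real obstacle; the probabilistic heart of the matter is the single triangle-inequality line.
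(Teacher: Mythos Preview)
Your proof is correct and follows essentially the same route as the paper: apply the triangle inequality through an arbitrary reference point and pass to the infimum, the only cosmetic difference being that the paper uses a minimizing sequence $(y_n)$ with $\myE_P\,\myL(y_n,Y') \to \alpha$ where you take the infimum over $y_0$ directly. Your final paragraph on joint measurability flags a genuine subtlety that the paper itself leaves implicit.
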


\begin{proof} 
If $\myL$ is a measurable metric, then $\myL$ is nonnegative with
$\myL(y,y) = 0$ and $\myL(y,y'') \leq \myL(y,y') + \myL(y',y'')$ for
all $y, y', y'' \in \Omega$.  Given any $P \in \cP$ there exists a
sequence $(y_n)$ in $\Omega$ such that
\[
\alpha 
= {\textstyle \inf_y} \, \myE_P \hsp \myL(y,Y')  
= {\textstyle \lim_{n \to \infty}} \, \myE_P \hsp \myL(y_n,Y').   
\] 
Thus, 
\[ 
\alpha \leq \beta = \myE_P \hsp \myL(Y,Y') 
\leq \myE_P ( \hsp \myL(Y,y_n) + \myL(y_n,Y') ) 
= 2 \, \myE_P \hsp \myL(y_n,Y') 
\] 
for all integers $n = 1, 2, \ldots$ \ The Cover-Hart inequality
(\ref{eq:CH2}) emerges in the limit as $n \to \infty$, as desired.
\end{proof} 

A function $\myL: \Omega \times \Omega \to [0,\infty)$ is said to be a
  negative definite kernel if it is symmetric in its arguments, with
  $\myL(y,y) = 0$ for all $y \in \Omega$, and
\[
\sum_{i=1}^n \sum_{j=1}^n a_i a_j \, \myL(y_i,y_j) \leq 0
\]
for all finite systems of points $y_1, \ldots, y_n \in \Omega$ and
coefficients $a_1, \ldots, a_n \in \real$ such that $a_1 + \cdots +
a_n = 0$.  Negative definite kernels play major roles in harmonic
analysis (Berg, Christensen and Ressel 1984) and in the theory of
stochastic processes, where they arise as the structure functions or
variograms of random functions with stationary increments (Gneiting,
Sasv\'ari and Schlather 2002).  A wealth of examples of such functions
can be found in the monograph by Berg, Christensen and Ressel (1984)
and the references therein.

\begin{theorem}  \label{th:CND}
Any continuous negative definite kernel belongs to the Cover-Hart class.
\end{theorem}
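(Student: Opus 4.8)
Since $\myL \ge 0$, we have $\myE_P \hsp \myL(y,Y') \ge \alpha$ for every $y$, so averaging over an independent copy $Y$ gives $\beta \ge \alpha$ with no further hypotheses; the content of the theorem is the bound $\beta \le 2 \hsp \alpha$, and this is vacuous when $\alpha = \infty$, so I would assume $\alpha < \infty$ throughout.

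The plan is to isolate first the finitely supported case, where the negative-definiteness hypothesis does all the work in one line. If $P = \sum_{i=1}^n p_i \hsp \delta_{y_i}$ and $z \in \Omega$ is arbitrary, then applying the defining inequality of a negative definite kernel to the $n+1$ points $z, y_1, \dots, y_n$ with coefficients $-1, p_1, \dots, p_n$, which sum to zero, and using $\myL(z,z) = 0$, gives
\[
\myE_P \hsp \myL(Y,Y') = \sum_{i,j} p_i \hsp p_j \hsp \myL(y_i,y_j) \le 2 \sum_j p_j \hsp \myL(z,y_j) = 2 \hsp \myE_P \hsp \myL(z,Y').
\]
Taking the infimum over $z$ yields $\beta \le 2 \hsp \alpha$. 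Continuity is not used here; it enters only in the passage to general Radon measures.

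For arbitrary $P \in \cP$ I would invoke the Hilbert-space representation of continuous negative definite kernels (Schoenberg; Berg, Christensen and Ressel 1984): because $\myL$ is continuous, symmetric, negative definite and vanishes on the diagonal, there are a Hilbert space $\mathcal{H}$ and a map $f : \Omega \to \mathcal{H}$ with $\myL(y,y') = \| f(y) - f(y') \|^2$ for all $y, y'$. Then $\| f(y) - f(y') \|^2 = \myL(y,y') \to 0$ as $y' \to y$, so $f$ is continuous, hence Borel measurable; and since a Radon measure is carried by a $\sigma$-compact set, $f(Y)$ takes values almost surely in a separable subspace. Thus $f(Y)$ and $f(Y')$ are genuine $\mathcal{H}$-valued random elements whose common law is the pushforward of $P$.

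Granting $\alpha < \infty$, choose $z_0$ with $\myE \| f(z_0) - f(Y) \|^2 = \myE_P \hsp \myL(z_0, Y') < \infty$; then $\myE \| f(Y) \|^2 < \infty$, so the Bochner mean $m := \myE \hsp f(Y) \in \mathcal{H}$ is well defined. By independence the cross term vanishes when $\| f(Y) - f(Y') \|^2$ is expanded about $m$, giving $\beta = \myE \| f(Y) - f(Y') \|^2 = 2 \hsp \myE \| f(Y) - m \|^2$, while the elementary bias-variance split gives $\myE_P \hsp \myL(z,Y') = \| f(z) - m \|^2 + \myE \| f(Y) - m \|^2 \ge \tfrac{1}{2} \beta$ for every $z$; taking the infimum over $z$ produces $\alpha \ge \tfrac{1}{2} \beta$, as required. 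The one real obstacle is precisely this measure-theoretic step — ensuring the embedding $f$ is measurable and that the relevant $\mathcal{H}$-integrability is in force, so that the Bochner mean and the variance decomposition are legitimate and no difficulty arises from $\myL$ being unbounded — and this is exactly where the continuity of $\myL$, together with the Radon property of $P$, is spent; the rest is the one-line finite computation above plus routine bookkeeping.
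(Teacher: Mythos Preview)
Your argument is correct. The paper's proof, by contrast, is a two-line citation: it invokes Theorem~2.1 of Berg, Christensen and Ressel (1984, p.~235), which states directly that
\[
\myE_P \hsp \myL(Y,Y') + \myE_Q \hsp \myL(Z,Z') \le 2 \, \myE_{Q,P} \hsp \myL(Z,Y')
\]
for Radon measures $P,Q$ and a continuous negative definite kernel, and then specializes $Q$ to the point mass $\delta_y$ to obtain $\beta \le 2 \, \myE_P \hsp \myL(y,Y')$ for every $y$.

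Your route is genuinely different in that you unpack what lies behind that cited inequality: the Schoenberg embedding $\myL(y,y') = \|f(y)-f(y')\|^2$, followed by an explicit bias--variance decomposition about the Bochner mean $m = \myE f(Y)$. In effect you are reproving the BCR inequality in the special case $Q = \delta_z$, rather than quoting it. What your approach buys is self-containment and a clear picture of where each hypothesis is spent (continuity for measurability of $f$; the Radon property for $\sigma$-compact support and hence essential separability of $f(Y)$; finiteness of $\alpha$ for Bochner integrability). What the paper's approach buys is brevity --- it delegates all the measure-theoretic work to the reference. Your separate treatment of finitely supported $P$ is a nice warm-up but is not logically needed, since your Hilbert-space argument already covers that case.
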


\begin{proof} 
If $\hsp \myE_P \hsp \myL(y,Y') = \infty$ for all $y$, then clearly the
Cover-Hart inequality (\ref{eq:CH2}) holds.  Thus, we may assume that
$\alpha = \inf_y \myE_P \hsp \myL(y,Y')$ is finite.  By Theorem 2.1 in
Berg, Christensen and Ressel (1984, p.~235),
\[ 
\myE_P \hsp \myL(Y,Y') + \myE_Q \hsp \myL(Z,Z') \leq 2 \, \myE_{Q,P} \hsp \myL(Z,Y'), 
\]
where $P$ and $Q$ are Radon measures, $Y$ and $Y'$ have distribution
$P$, $Z$ and $Z'$ have distribution $Q$, and $Y, Y', Z, Z'$ are
independent.  When $Q$ is the point measure in $y \in \Omega$, the
above inequality implies that $\beta = \myE_P \hsp \myL(Y,Y') \leq 2
\, \myE_P \hsp \myL(y,Y')$ for all $y$, whence the Cover-Hart
inequality is satisfied.
\end{proof} 

\begin{table}[t] 
\centering 
\caption{Examples of negative definite kernels. Here, $\zz$ denotes
  the integers, $\real$ the real numbers, and $\sphere^{d-1}$ the unit
  sphere in the Euclidean space $\real^d$, where $d \geq 2$.  The
  symbol $\one(\cdot)$ stands for an indicator function, $\| \cdot
  \|_p$ for the $\ell_p$-norm or quasi-norm in $\real^d$, and
  $\mbox{gcd}$ for the geodetic or great circle distance on
  $\sphere^{d-1}$.
\label{tab:L}}

\footnotesize
\begin{tabular}{lll}
\toprule
Space & Kernel & Parameters \rule{0mm}{4mm} \\
\midrule
$\zz$          & $\myL(y,y')= \one(y \not= y')$ & \rule{0mm}{4mm} \\
$\real$        & $\myL(y,y') = |y - y'|^q$ & $q \in (0,2]$ \rule{0mm}{4mm} \\
$\real^2$      & $\myL(y,y') = \|y - y'\|_p^q$ & $p \in (2,\infty], \: q \in (0,1]$ \rule{0mm}{4mm} \\
$\real^d$      & $\myL(y,y') = \|y - y'\|_p^q$ \rule{2mm}{0mm} & $p \in (0,2], \: q \in (0,p]$ \rule{0mm}{4mm} \\
$\sphere^{d-1}$ & $\myL(y,y') = \mbox{gcd}(y,y')$ & \rule{0mm}{4mm} \\
\bottomrule
\end{tabular}
\end{table} 

We now discuss a few special cases, which are summarized in Table
\ref{tab:L}.  If $\Omega$ is a discrete space, the misclassification
loss, $\myL(y,y') = \one(y \not= y')$ is a continuous negative
definite kernel.  Thus, Theorem \ref{th:CND} applies and reduces to a
classical result.  When $\Omega$ is finite, the upper bound in the
inequality can be strengthened (Cover and Hart 1967).

If $\Omega$ is the real line, $\real$, the squared error loss
function, $\myL(y,y') = (y-y')^2$ is a continuous negative definite
kernel.  For a far-reaching generalization, let $\| \cdot \|_p$ denote the
standard $\ell_p$-norm or quasi-norm in the Euclidean space $\real^d$.
Schoenberg's theorem (Schoenberg 1938; Berg, Christensen and Ressel
1984, p.~74) and a strand of literature culminating in the work of
Koldobski\v{\i} (1992) and Zastavnyi (1993) demonstrate that the
kernel
\[
\myL(y,y') = \| y - y' \|_p^q
\]
is negative definite under the conditions stated in Table \ref{tab:L},
but not otherwise.  Theorem \ref{th:CND} applies and the respective
loss function is a member of the Cover-Hart class.  To give an
explicit example, if $m = 1$ and the probability measure $P$ is
Gaussian, then $\alpha = 2^{\hsp q/2} \hsp \beta$.

Negative definite kernels can readily be constructed from positive
definite functions (Schoenberg 1938; Gneiting, Sasv\'ari and Schlather
2002).  In this light, graph kernels (Borgwardt et al.~2005;
Vishwanathan et al.~2010) and related types of positive definite
functions on discrete structured spaces, as reviewed by Hofmann,
Sch\"olkopf and Smola (2008), yield Cover-Hart loss functions that are
relevant to the prediction of highly structured objects, such as
strings, trees, graphs and patterns.

\section{Probabilistic predictions based on a single observation}  \label{sec:probabilistic}

Thus far, we have studied single-valued point forecasts.  In this
section, we turn to probabilistic predictions, where the forecasts
take the form of predictive probability distributions over future
quantities and events (Gneiting 2008).  Technically, we retain the
above setting and let $\cP$ denote the class of the Radon probability
measures on a Hausdorff space $(\Omega, \cB)$.  Predictive performance
is evaluated by means of a score,
\[
\myS(Q,y'), 
\]
that quantifies the loss when the probabilistic forecast is the Radon
probability measure $Q \in \cP$, and the realizing observation is $y'
\in \Omega$.  

A scoring rule thus is a function $\myS : \cP \times \Omega \to
\real$.  It is called proper if
\[ 
\myE_P \hsp \myS(P,Y') \leq \myE_P \hsp \myS(Q,Y') 
\] 
for all probability measures $P, Q \in \cP$, where $Y'$ has
distribution $P$ and the expectations are assumed to exist.  In other
words, proper scoring rules encourage careful and honest probabilistic
predictions and prevent hedging.

By Theorem 4 of Gneiting and Raftery (2007), proper scoring rules can
be constructed from negative definite kernels, as follows.\footnote{As
  pointed out to the author by Jochen Fiedler, Theorem 4 of Gneiting
  and Raftery (2007) ought to be formulated relative to the class of
  the Radon probability measures on $\Omega$, as opposed to the larger
  class of the Borel probability measures.  While we are unaware of a
  counterexample for Borel measures, the result of Berg, Christensen
  and Ressel (1984) used in the proof of the theorem applies to Radon
  measures only.}  Let $\myL$ be a nonnegative, continuous negative
definite kernel.  Then the scoring rule
\[
\myS(P,y') = \myE_P \hsp \myL(Y,y') - \frac{1}{2} \, \myE_P \hsp \myL(Y,Y')
\]
is proper relative to the class of the Radon probability measures $P$
for which the expectation $\myE_P \hsp \myL(Y,Y')$ is finite, where
$Y$ and $Y'$ are independent with distribution $P$.  Scoring rules of
this form are referred to as kernel scores, and several of the most
popular and most frequently used examples belong to this class,
including both the Brier score and the continuous ranked probability
score.
 
Under a kernel score, a straightforward calculation leads to a natural
analogue of the Cover-Hart inequality that applies to probabilistic
predictions.  Specifically, if we define $\alpha \equiv {\textstyle
\inf_Q} \, \myE_P \hsp \myS(Q,Y')$ and $\myS$ is a kernel score, a
straightforward calculation shows that
\begin{equation}  \label{eq:CHP2}  
\alpha = \myE_P \hsp \myS(P,Y') 
\leq \beta \equiv \myE_P \hsp \myS(\delta_Y,Y') = 2 \hsp \alpha,
\end{equation} 
where $Y$ and $Y'$ are independent with distribution $P$, and
$\delta_Y$ is the point or Dirac random probability measure in $Y$.
Again, half the information in an infinite sample is contained in a
sample of size one, in that probabilistically predicting a future
observation from a single past observation incurs at most twice the
Bayes risk.

\section{Discussion} \label{sec:discussion}

Despite being well known in pattern analysis and information theory
(see, for example, Devroye, Gy\"{o}rfi and Lugosi 1996), the ground
breaking work of Cover and Hart (1967) and Cover (1968) has hardly
received any attention in the statistical literature.

In this paper, we have demonstrated that the Cover-Hart inequality
(\ref{eq:CH1}) applies whenever the loss function is a measurable
metric, or a continuous negative definite kernel.  Many but not all
metrics are negative definite (Meckes 2011), and so the two families
may have distinct members.  An interesting open question is whether or
not the Cover-Hart class equals the convex cone that is generated by
these two families.  In particular, I do not know whether or not the
Cover-Hart class contains any asymmetric loss functions.

Typically, predictions are conditional on an information set, leading
to natural ramifications of single nearest neighbor methods, such as
nonparametric regression (Stone 1977) and kernel estimators of
conditional predictive distributions (Hyndman, Bashtannyk and Grunwald
1996; Hall, Wolff and Yao 1999).  While we have suppressed the
dependence on the information set in our work, the Cover-Hart
inequality remains valid in this setting, by conditioning on and
integrating over the information set.

In this light, if empirically observed mean score differentials exceed
100\%, this may suggest that forecasters have distinct information
sets.  A simulation example is reported on in Tables 4 and 6 of
Gneiting (2011), where the differences in the predictive scores
between Mr.~Bayes and his competitors, whose predictions are based on
thoroughly distinct information sets, are striking.

From an applied perspective, the Cover-Hart inequality (\ref{eq:CH1})
for point forecasts, and its analogue (\ref{eq:CHP1}) for
probabilistic forecasts, allow for interesting interpretations.  Given
that under many of the most prevalent loss functions used in practice,
Alice, despite having an infinite sample at her disposal, can at most
halve Bob's risk, who has access to a sample of size one only, it is
not surprising that empirically observed differentials in the
predictive performance of competing forecasters tend to be small.  For
example, this was observed in the Netflix contest, where predictive
performance was measured in terms of the (root mean) squared error
(Feuerverger, He and Khatri 2012).  Taking a much broader perspective,
the Cover-Hart inequality may contribute to our understanding of the
empirical success not only of nearest neighbor techniques and their
ramifications, but reasoning and learning by analogy in general
(Gentner and Holyoak 1997).

\section*{Acknowledgements}

The author is grateful to the Alfried Krupp von und zu Behlen
Foundation for support, and thanks Jochen Fiedler, Marc Genton,
Christoph Schn\"{o}rr and Jon Wellner for discussions and references.

\section*{References}

\newenvironment{reflist}{\begin{list}{}{\itemsep 0mm \parsep 1mm
\listparindent -7mm \leftmargin 7mm} \item \ }{\end{list}}

\vspace{-7mm}
\begin{reflist}

Berg, C., Christensen, J.~P.~R., and Ressel, P.~(1984). {\em Harmonic
Analysis on Semigroups}.  New York: Springer.

Borgwardt, K.~M., Ong, C.~S., Sch\"onauer, S., Vishwanathan, S.~V.~N.,
Smola, A.~J.~and Kriegel, H.-P.~(2005).  Protein function prediction
via graph kernels.  {\em Bioinformatics} 21, {i}47--{i}56.

Cover, T.~M.~(1968).  Estimation by the nearest neighbor rule.  {\em
  IEEE Transactions on Information Theory}, 14, 50--55.

Cover, T.~M.~(1977). Comment. {\em Annals of Statistics}, 5, 627--628. 

Cover, T.~M.~and Hart, P.~E.~(1967).  Nearest neighbor pattern
classification.  {\em IEEE Transactions on Information Theory}, 13,
21--27.

Devroye, L., Gy\"{o}rfi, L., and Lugosi, G.~(1996).  A Probabilistic
Theory of Pattern Recognition.  Springer, New York. 

Feuerverger, A., He, Y.~and Khatri, S.~(2012).  Statistical
significance of the Netflix challenge.  {\em Statistical Science}, in
press.

Gentner, D.~and Holyoak, K.~J.~(1997).  Reasoning and learning by
analogy: Introduction.  {\em American Psychologist}, 52, 32--34.

Gneiting, T.~(2008).  Editorial: Probabilistic forecasting.  {\em
  Journal of the Royal Statistical Society Series A: Statistics in
  Society}, 171, 319--321.

Gneiting, T.~(2011).  Making and evaluating point forecasts.  {\em
  Journal of the American Statistical Association}, 106, 746--762.

Gneiting, T.~and Raftery, A.~E.~(2007).  Strictly proper scoring
rules, prediction, and estimation.  {\em Journal of the American
  Statistical Association}, 102, 359--378.

Gneiting, T., Sasv\'ari, Z.~and Schlather, M.~(2001).  Analogies and
correspondences between variograms and covariance functions.  {\em
  Advances in Applied Probability}, 33, 617--630.

Hall, P., Wolff, R.~C.~L.~and Yao, Q,~(1999).  Methods for estimating
a conditional distribution function.  {\em Journal of the American
  Statistical Association}, 94, 154--163.

Hofmann, T., Sch\"olkopf, B.~and Smola, A.~(2008).  Kernel methods in
machine learning.  {\em Annals of Statistics}, 36, 1171--1220.

Hyndman, R.~J., Bashtannyk, D.~M.~and Grunwald, G.~K.~(1996).
Estimating and visualizing conditional densities.  {\em Journal of
  Computational and Graphical Statistics}, 5, 315--336.

Koldobski\v{\i}, A.~L.~(1992).  Schoenberg's problem on positive
definite functions.  {\em St.~Petersburg Mathematical Journal}, 3,
563--570.

Meckes, M.~W.~(2011).  Positive definite metric spaces.  Preprint, 
\url{arxiv:1012.5863v3.pdf}. 

Schoenberg, I.~J.~(1938).  Metric spaces and positive definite
functions.  {\em Transactions of the American Mathematical Society},
44, 522--536.

Stone, C.~J.~(1977).  Consistent nonparametric regression (with
discussion).  {\em Annals of Statistics}, 5, 595--645.

Vishwanathan, S.~V.~N., Schraudolph, N., Kondor, R., and Borgwardt,
K.~M.~(2010).  Graph kernels.  {\em Journal of Machine Learning
  Research} 11, 1201--1242.

Zastavnyi, V.~P.~(1993).  Positive definite functions depending on the
norm.  {\em Russian Journal of Mathematical Physics}, 1, 511--522.
\end{reflist}

\end{document}